\theoremstyle{plain}
\newtheorem{thm}{\protect\theoremname}
  \theoremstyle{definition}
  \newtheorem{defn}[thm]{\protect\definitionname}
  \theoremstyle{plain}
  \newtheorem{prop}[thm]{\protect\propositionname}
  \theoremstyle{remark}
  \newtheorem{rem}[thm]{\protect\remarkname}
\def\O{\Omega}
\def\n{\nabla}
\def\p{\partial}
\def\a{\alpha}
\def\b{\beta}
\def\n{\nabla}
\def\O{\Omega}
\def\p{\partial}
\def\a{\alpha}
\def\b{\beta}
\def\d{\delta}
\def\l{\lambda}
\def\s{\sigma}
\def\De{\Delta}
\def\n{\nabla}
\def\<{\langle}
\def\>{\rangle}
\def\De{\Delta}
\def\n{\nabla}
\def\O{\Omega}
\def\p{\partial}
\def\a{\alpha}
\def\b{\beta}
\def\d{\delta}
\def\l{\lambda}
\def\s{\sigma}
\def\ep{\epsilon}
\providecommand{\definitionname}{Definition}
  \providecommand{\propositionname}{Proposition}
  \providecommand{\remarkname}{Remark}
\providecommand{\theoremname}{Theorem}
  \providecommand{\definitionname}{Definition}
  \providecommand{\propositionname}{Proposition}
  \providecommand{\remarkname}{Remark}
\providecommand{\theoremname}{Theorem}
\begin{document}

\title{Classical Neumann Problems for Hessian equations and Alexandrov-Fenchel's inequalities}

\author{Guohuan Qiu \thanks{Department of Mathematics, University of Science and Technology of
China, Hefei , P. R. China \quad{}Email: guohuan@ustc.edu.cn. The
research of GQ was supported by China Scholarship Council. } \and Chao Xia \thanks{School of Mathematical Sciences, Xiamen University, 361005, Xiamen,
China\quad{}Email: chaoxia@xmu.edu.cn. Research of CX is supported
in part by the Fundamental Research Funds for the Central Universities
(Grant No. 20720150012), NSFC (Grant No. 11501480).}}
\maketitle
\begin{abstract}
Recently, the first named author together with Xinan Ma \cite{ma2015neumann},
have proved the existence of the Neumann problems for Hessian equations.
In this paper, we proceed further to study classical Neumann problems
for Hessian equations. We prove here the existence of classical
Neumann problems under the uniformly convex domain in $\mathbb{R}^{n}$.
As an application, we use the solution of the classical Neumann problem
to give a new proof of a family of Alexandrov-Fenchel inequalities
arising from convex geometry. This geometric application is motivated
from Reilly \cite{Reilly1980}. 
\end{abstract}

\section{Introduction}

Let $\O$ be a bounded domain in $\mathbb{R}^{n}$ with smooth boundary
$\p\O$. It is well-known that for two sufficiently regular function
$f$ on $\bar{\Omega}$ and $\varphi$ on $\p\O$, the following classical
Neumann boundary value problem for Poisson's equation: 
\begin{equation}
\begin{cases}
\De u & =f(x)\begin{array}{llll}
 & \hbox{ in }\Omega,\end{array}\\
u_{\nu} & =\varphi(x)\begin{array}{llll}
 & \hbox{ on }\partial\Omega\end{array}
\end{cases}\label{eq1}
\end{equation}
admits a classical solution $u$, which is unique up to an additive
constant, if and only if 
\begin{equation}
\int_{\Omega}fdx=\int_{\partial\Omega}\varphi d\mu.\label{eq:compatibility}
\end{equation}
Here $\nu$ is the outward unit normal of $\p\O$ and $u_{\nu}=\frac{\p u}{\p\nu}$
is the normal derivative of $\nu$. This result was proved by using
the Fredholm alternative from functional analysis, See e.g. \cite{gilbarg2015elliptic},
pp. 130. See also a recent paper by Nardi \cite{Nardi}.

It is natural to ask whether similar result holds for $k$-Hessian
equation $\sigma_{k}(D^{2}u)=f$. The Neumann type problems for Monge-Ampere
type equations have been well studied by Lions-Trudinger-Urbas \cite{lions1986neumann}.
Trudinger \cite{trudinger1987degenerate} considered and proved the
existence for the Neumann type problem for $k$-Hessian equation in
the case when the domain is a ball and he conjectured similar result
holds for general uniformly convex domains. In a recent paper \cite{ma2015neumann},
the fist named author and Ma gave an affirmative answer to Trudinger's
conjecture. Particularly, the Neumann type boundary condition in \cite{ma2015neumann}
is $u_{\nu}=\varphi(x,u)$, where $\frac{\partial\varphi}{\partial u}\leq-c_{0}<0$
for some positive $c_{0}$. 

It is clear that the assumption on $\varphi$ excludes the case that
$\varphi(x,u)$ only depends on $x$, i.e., $\varphi(x,u)=\varphi(x)$.
In this paper, we will study this case, namely, the classical Neumann
boundary value problem for the $k$-Hessian equation: 
\begin{equation}
\begin{cases}
\sigma_{k}(D^{2}u)=f(x) & \begin{array}{cc}
 & \hbox{ in }\Omega,\end{array}\\
u_{\nu}=\varphi(x) & \begin{array}{cc}
 & \hbox{ on }\partial\Omega.\end{array}
\end{cases}\label{eq2}
\end{equation}

It turns out that the existence may not hold for general $f(x)$ and
$\varphi(x)$. This is because they should satisfy some compatibility
condition as \eqref{eq:compatibility}. In the case $k=n$, Lions-Trudinger-Urbas
\cite{lions1986neumann} showed that for sufficient regular $f$ and
$\varphi$ on $\bar{\O}$ with $f>0$, there exists a pair $(\lambda,u)$
satisfying 
\begin{equation}
\begin{cases}
{\rm det}(D^{2}u)=f(x) & \begin{array}{cc}
 & \hbox{ in }\Omega,\end{array}\\
u_{\nu}=\lambda+\varphi(x) & \begin{array}{cc}
 & \hbox{ on }\partial\Omega.\end{array}
\end{cases}
\end{equation}
Here $\lambda$ is a unique constant while $u$ is unique up to an
additive constant.

Our main result in this paper is the following 
\begin{thm}
\label{thm:Classic} Let $\Omega$ be a $C^{4}$ bounded, uniformly
convex domain in $\mathbb{R}^{n}$. Let $f\in C^{2}(\overline{\Omega})$
with $f>0$ and $\varphi\in C^{3}(\overline{\Omega})$. Then there
is a unique constant $\lambda$ and a unique $k$ admissible solution
$u\in C^{3,\alpha}(\overline{\Omega})$ up to an additive constant,
satisfying 
\begin{equation}
\begin{cases}
\sigma_{k}(D^{2}u)=f(x) & \begin{array}{cc}
 & \hbox{ in }\Omega,\end{array}\\
u_{\nu}=\lambda+\varphi(x) & \begin{array}{cc}
 & \hbox{ on }\partial\Omega.\end{array}
\end{cases}\label{eq:classic}
\end{equation}

\end{thm}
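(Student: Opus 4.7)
The plan is to adapt the perturbation/continuity strategy used by Lions--Trudinger--Urbas \cite{lions1986neumann} in the Monge--Amp\`ere case. For each $\epsilon\in(0,1)$, one first solves the auxiliary Neumann problem
\[
\sigma_k(D^2u_\epsilon)=f(x)\ \text{ in }\Omega,\qquad (u_\epsilon)_\nu=\varphi(x)-\epsilon u_\epsilon\ \text{ on }\partial\Omega,
\]
whose boundary functional satisfies $\partial_u(\varphi-\epsilon u)=-\epsilon<0$, so the existence result of Qiu--Ma \cite{ma2015neumann} produces a unique $k$-admissible $u_\epsilon\in C^{3,\alpha}(\bar\Omega)$. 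The sought Lagrange constant $\lambda$ will emerge as $\lambda=\lim_{\epsilon\to 0}(-\epsilon\bar u_\epsilon)$, where $\bar u_\epsilon:=u_\epsilon(x_0)$ is the value at a fixed reference point $x_0\in\bar\Omega$ (equivalently one could take $\bar u_\epsilon$ to be the integral average of $u_\epsilon$). Writing $v_\epsilon:=u_\epsilon-\bar u_\epsilon$ and $\lambda_\epsilon:=-\epsilon\bar u_\epsilon$, the renormalized pair $(\lambda_\epsilon,v_\epsilon)$ satisfies
\[
\sigma_k(D^2 v_\epsilon)=f(x)\ \text{ in }\Omega,\qquad (v_\epsilon)_\nu=\lambda_\epsilon+\varphi(x)-\epsilon v_\epsilon\ \text{ on }\partial\Omega,
\]
with $v_\epsilon(x_0)=0$, and the strategy is to prove Theorem \ref{thm:Classic} by extracting a subsequential limit as $\epsilon\to 0$.

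The core work is to obtain a priori estimates for $(\lambda_\epsilon,v_\epsilon)$ that are uniform in $\epsilon$. For an upper bound on $\lambda_\epsilon$: admissibility gives $\sigma_1(D^2u_\epsilon)=\Delta u_\epsilon>0$, which rules out an interior maximum of $u_\epsilon$; at a boundary max one has $(u_\epsilon)_\nu\geq 0$, so $\lambda_\epsilon\leq\max\varphi+\epsilon\,\mathrm{osc}(v_\epsilon)$. For a matching lower bound, I would integrate the Neumann condition over $\partial\Omega$, use $\int_\Omega\Delta u_\epsilon=\int_{\partial\Omega}(u_\epsilon)_\nu\,d\mu$ and the Newton--Maclaurin inequality $\sigma_1(D^2u_\epsilon)\geq c_{n,k}f^{1/k}$ on the admissible cone. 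For the $C^1$ and $C^2$ bounds I would revisit the estimates of \cite{ma2015neumann}, noting that $\sigma_k(D^2\cdot)$, $\nabla\cdot$ and $\partial_\nu\cdot$ are invariant under addition of constants to $u$, so the estimates must depend on the solution only through $\lambda_\epsilon$, $\varphi$, $f$, $\Omega$ and $\mathrm{osc}(v_\epsilon)$; combined with the elementary line-integral bound $\|v_\epsilon\|_{C^0}\leq\mathrm{diam}(\Omega)\cdot\|\nabla v_\epsilon\|_{C^0}$ coming from $v_\epsilon(x_0)=0$, the bootstrap closes. Schauder and Evans--Krylov theory then upgrade the uniform $C^2$ bound to $C^{3,\alpha}$, and Arzel\`a--Ascoli yields a subsequence $(\lambda_{\epsilon_j},v_{\epsilon_j})\to(\lambda,v)$ producing a $k$-admissible $C^{3,\alpha}$ solution of \eqref{eq:classic}.

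For uniqueness, let $(\lambda_i,u_i)$, $i=1,2$, be any two admissible solutions and set $w:=u_1-u_2$. Since the $k$-admissible cone $\Gamma_k$ is convex, every convex combination $tD^2u_1+(1-t)D^2u_2$ lies in $\Gamma_k$, and hence the symmetric matrix
\[
A^{ij}(x):=\int_0^1\sigma_k^{ij}\bigl(tD^2u_1+(1-t)D^2u_2\bigr)\,dt
\]
is strictly positive definite. Then $A^{ij}w_{ij}=0$ in $\Omega$ with $w_\nu=\lambda_1-\lambda_2$ on $\partial\Omega$. The strong maximum principle forces the extrema of $w$ onto $\partial\Omega$ unless $w$ is constant, and applying Hopf's lemma at the maximum and at the minimum of $w$ then gives $\lambda_1-\lambda_2\geq 0$ and $\lambda_1-\lambda_2\leq 0$, whence $\lambda_1=\lambda_2$ and $w\equiv\text{const}$.

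The step I expect to be the main obstacle is the uniform second-order boundary estimate. In \cite{ma2015neumann} the strict monotonicity $\partial_u\varphi\leq-c_0<0$ enters the boundary $C^2$ argument through an auxiliary function whose maximum-principle analysis typically produces constants of order $1/c_0$. Here $c_0=\epsilon\to 0$, so one must reorganize the argument so that the troublesome terms are absorbed not by $\epsilon$ itself but by the already-controlled quantities $|\lambda_\epsilon|$ and $\mathrm{osc}(v_\epsilon)$, exploiting the uniform convexity of $\partial\Omega$ to supply the necessary Hopf-type reserve.
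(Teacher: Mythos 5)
Your scaffolding---perturbing to \eqref{eq:eps}, invoking \cite{ma2015neumann} for existence at fixed $\epsilon$, normalizing, passing to the limit, and proving uniqueness by linearizing and applying the maximum principle plus Hopf's lemma---is exactly the paper's route, and your uniqueness paragraph is correct as stated. The genuine gap is in the uniform $C^1$ bound. You propose to obtain it by observing that the estimates of \cite{ma2015neumann}, applied to $v_\epsilon=u_\epsilon-\bar u_\epsilon$, depend on the solution only through $\mathrm{osc}(v_\epsilon)$ and the data, and then to ``close the bootstrap'' with $\|v_\epsilon\|_{C^0}\le \mathrm{diam}(\Omega)\,\|\nabla v_\epsilon\|_{C^0}$. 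This is circular: the gradient bound of \cite{ma2015neumann} is an increasing, in general superlinear, function of the $C^0$ norm, so substituting the diameter bound only yields an inequality of the form $\|\nabla v_\epsilon\|\le C\bigl(\mathrm{diam}(\Omega)\,\|\nabla v_\epsilon\|\bigr)$, from which nothing follows without a structural smallness that you have not established. This is precisely the obstruction the paper isolates: the $C^1$ and $C^2$ estimates of \cite{ma2015neumann} remain valid for \eqref{eq:eps} uniformly in $\epsilon$, but they presuppose a $C^0$ bound, and no uniform $C^0$ bound can exist because the limit problem is invariant under adding constants. The missing ingredient is a gradient estimate independent of $\|u_\epsilon\|_{C^0}$ (Proposition \ref{prop}). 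The paper proves it by first showing $\sup_{\overline\Omega}|\epsilon u_\epsilon|\le C$ (comparison with $A|x|^2$ together with subharmonicity of admissible solutions), and then applying the maximum principle to $P=\log|Dw|^2+\alpha|x|^2$ with $w=u+(-\epsilon u+\varphi)d$, $d$ the distance to $\partial\Omega$: an interior maximum is excluded using \eqref{eq:f}--\eqref{eq:f1} and the bound on $\epsilon u$ (note that $u$ itself never enters, only $Dw$ and $\epsilon u$), while a boundary maximum is excluded because $w_\nu=0$ there and the uniform convexity $h_{\alpha\beta}\ge c_0\delta_{\alpha\beta}$ forces $P_\nu<0$ once $\alpha$ and $\epsilon$ are chosen small relative to $c_0$. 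Only after this does the oscillation get controlled, via Poincar\'e.

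Relatedly, you locate the ``main obstacle'' in the boundary second-order estimate, fearing constants of order $1/\epsilon$ coming from the monotonicity $\partial_u(\varphi-\epsilon u)=-\epsilon$. That is not where the paper's difficulty lies: the second-order estimates of \cite{ma2015neumann} are asserted to carry over without degenerating in $\epsilon$ once the $C^0$ and $C^1$ norms of the normalized solution are under control (see the remark following Proposition \ref{prop}); the strict monotonicity is needed only for solvability at fixed $\epsilon$. Your bounds on $\lambda_\epsilon$ are also coupled to the yet-uncontrolled $\mathrm{osc}(v_\epsilon)$, whereas the paper's Step 1 bounds $\sup|\epsilon u_\epsilon|$ outright, with no such coupling. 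In short, the architecture is right, but the proposal omits the paper's key new estimate---the $C^0$-independent gradient bound built on the test function $P$ and the uniform convexity of $\partial\Omega$---and without it the limiting argument cannot start.
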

A solution $u$ is called $k$-admissible if the eigenvalue of the
Hessian matrix $D^{2}u$ belongs to $\Gamma_{k}^{+}$, see Section
2.

 We remark that unlike the case $k=1$, we have no explicit expression
for $\lambda$. However, it is easy to see a lower bound for $\l$:
\begin{eqnarray}
\l{\rm Area}(\p\O)\geq n\int_{\O}\left(\frac{f(x)}{C_{n}^{k}}\right)^{\frac{1}{k}}dx-\int_{\p\O}\varphi(x)d\mu.\label{lambda}
\end{eqnarray}

Therefore, the classical Neumann problem for $k$-Hessian equations
\eqref{eq2} may have no solutions. For example, in the case that
$f=1$ and $\varphi=0$, $\l$ has to be a nonzero constant by virtue
of \eqref{lambda}.


Let us illustrate the idea of the proof of Theorem \ref{thm:Classic}.
On one hand, Fredholm alternative is not applicable on \eqref{eq:classic}
as in the classical Neumann problem for Poisson's equation \eqref{eq1},
for we deal with fully nonlinear partial differential equations. On
the other hand, it is impossible to get a uniform bound for the solutions
to (\ref{eq:classic}) since a solution plus any constant is still
a solution. Thus we can not use continuity method to get the existence.
In order to overcome this difficulty we use a perturbation argument.
We first consider for any $\epsilon>0$ the following boundary value
problem 
\begin{equation}
\begin{cases}
\sigma_{k}(D^{2}u)=f(x) & \begin{array}{cc}
 & \hbox{ in }\Omega,\end{array}\\
u_{\nu}=-\epsilon u+\varphi(x) & \begin{array}{cc}
 & \hbox{ on }\partial\Omega.\end{array}
\end{cases}\label{eq:eps}
\end{equation}
The result in \cite{ma2015neumann} gives us the existence of $u_{\epsilon}$
for (\ref{eq:eps}). We then prove a gradient estimate for $u_{\epsilon}$
independent of $\epsilon$. Once we get this, we shall have all the
regularity estimate and by letting $\epsilon\rightarrow0$ we obtain
a solution of (\ref{eq:classic}). This kind of argument has been
used in \cite{lions1986neumann} when $k=n$. Their a priori gradient
estimate heavily depends on the convexity of the solutions. However,
in general $k<n$ case we have no convexity. Instead we use directly
maximum principle on some good choice of test functions. The choice
of test functions is motivated from \cite{MQX}, see also \cite{ma2015neumann}.


\
 A motivation to study such classical Neumann problem for $k$-Hessian
equations is to prove the Alexandrov-Fenchel inequalities \cite{alexandroff1937theorie}
which are of fundamental importance in the theory of convex geometry.
For an introduction of Alexandrov-Fenchel inequalities in convex geometry
we refer to Schneider's encyclopedia book \cite{schneider2013convex}.
In the past several decades, many mathematicians study such inequalities
from the viewpoint of PDEs. Particularly, Trudinger \cite{trudinger1994isoperimetric}
tried to use the Dirichlet problems for $k$-Hessian equations and
$k$-curvature equations to reach the following special cases of Alexandrov-Fenchel
inequalities between two quermassintegrals for a bounded (possibly
non-convex) domain $\O\subset\mathbb{R}^{n}$: 
\begin{equation}
\left(\frac{V_{n-1-k}(\Omega)}{V_{n-1-k}(B)}\right)^{\frac{1}{n-1-k}}\geq\left(\frac{V_{n-1-l}(\Omega)}{V_{n-1-l}(B)}\right)^{\frac{1}{n-1-l}},\quad n-1\ge k>l\ge-1,\label{eq:AF0}
\end{equation}
where $B$ is the unit ball in $\mathbb{R}^{n}$, and $V_{n-1-k}(\O)$
is the $k$-th quermassintegrals of $\O$. 
Guan and Li \cite{guan2009quermassintegral} used inverse mean curvature
type flow (parabolic PDEs) to prove inequalities \eqref{eq:AF0} for
star-shaped domains. Inspired by Gromov's proof \cite{Gromov} of
the isoperimetric inequality, Chang and Wang \cite{chang2013some}
established inequalities \eqref{eq:AF0} for $l=-1$ and $k=1,2$ when
$\Omega$ is $(k+1)$-convex by using a solution of some PDE from the
optimal transport, see also \cite{qiu2015family} for any $k$. They
\cite{chang2013} also proved \eqref{eq:AF0} for general $k$ for
$(k+1)$-convex domains with non-optimal constants. Cabr\'e \cite{Cabre}
used the Neummann problem for Possion's equation \eqref{eq1} with
$f=1$ and $\varphi=constant$ plus the Alexandrov-Bakelman-Pucci
(ABP) estimate to give a very simple proof of the classical isoperimetric
inequality in the Euclidean space.

The above results have a common feature that two geometric quantities
are compared. From the theory of convex geometry, we know that the
Alexandrov-Fenchel inequalities can link three quermassintegrals.
 By applying Reilly's formula \cite{Reilly1973,reilly1977applications}
on the solution of the classical Neumann problem for Possion's equation
\eqref{eq1} with $f=1$ and $\varphi=constant$, Reilly \cite{Reilly1980}
gave a new proof of the following Minkowski's inequality for convex
domains in the Euclidean space, 
\begin{eqnarray}
{\rm {Area}}(\p\O)^{2}\geq\frac{n}{n-1}{\rm {Vol}}(\O)\int_{\p\O}Hd\mu,\label{Mink}
\end{eqnarray}
where $H$ is the mean curvature of $\p\O$. A similar result has
been proved recently by the second author \cite{xia2015minkowski}
in the hyperbolic space.

In the same spirit, we can apply Reilly's high order formula on the
solution of the Neumann problem for the $k$-Hessian equation \eqref{eq:classic}
with $f=1$ and $\varphi=0$ to give a new proof of the following
special Alexandrov-Fenchel's inequalities for convex domains in $\mathbb{R}^{n}$. 
\begin{thm}
\label{thm:AF} Let $\Omega$ be a smooth bounded uniformly convex
domain in $\mathbb{R}^{n}$. For any $1\leq k\leq n-1$, denote by
$\s_{k}(h)$ the $k$-th mean curvature of $\partial\Omega$. Then

\begin{equation}
{\rm {Area}}(\p\O)^{k+1}\geq\frac{n^{k}}{C_{n-1}^{k}}{\rm {Vol}}(\Omega)^{k}\int_{\partial\Omega}\sigma_{k}(h)d\mu.\label{eq:AFIne}
\end{equation}
Equality holds if and only if $\Omega$ is a ball in $\mathbb{R}^{n}$. 
\end{thm}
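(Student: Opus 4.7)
The plan is to apply Theorem~\ref{thm:Classic} with $f\equiv 1$ and $\varphi\equiv 0$ to obtain a $k$-admissible $u\in C^{3,\alpha}(\overline{\Omega})$ and a constant $\lambda>0$ satisfying
$$\sigma_k(D^2 u) = 1 \text{ in } \Omega, \qquad u_\nu = \lambda \text{ on } \partial\Omega,$$
and then deduce (\ref{eq:AFIne}) from the consistency of a sharp interior lower bound on $\lambda$ with a sharp boundary upper bound on $\lambda^{k+1}\int_{\partial\Omega}\sigma_k(h)\,d\mu$. The argument parallels Reilly's derivation of (\ref{Mink}) in the $k=1$ case, but uses a higher-order analogue of Reilly's formula.

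The lower bound is immediate from Maclaurin's inequality: the $k$-admissible eigenvalues of $D^2 u$ satisfy $\Delta u/n \geq (\sigma_k(D^2 u)/C_n^k)^{1/k} = (C_n^k)^{-1/k}$, so the divergence theorem together with the Neumann condition yields
$$\lambda\cdot{\rm Area}(\partial\Omega) = \int_{\partial\Omega}u_\nu\,d\mu = \int_\Omega \Delta u\,dx \geq n(C_n^k)^{-1/k}{\rm Vol}(\Omega),$$
which is exactly (\ref{lambda}). The main step will be the complementary estimate
\begin{equation}\label{eq:upperlambda}
\lambda^{k+1}\int_{\partial\Omega}\sigma_k(h)\,d\mu \leq (n-k)(C_n^k)^{-1/k}{\rm Vol}(\Omega);
\end{equation}
granted this, the algebraic identity $n^{k+1}/((n-k)C_n^k) = n^k/C_{n-1}^k$ converts (\ref{lambda}) and (\ref{eq:upperlambda}) into (\ref{eq:AFIne}).

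To derive (\ref{eq:upperlambda}) I would apply the divergence theorem to the next Newton tensor $T_{k+1}^{ij} := \partial\sigma_{k+1}(D^2u)/\partial u_{ij}$, which is divergence-free and satisfies $T_{k+1}^{ij}u_{ij} = (k+1)\sigma_{k+1}(D^2 u)$, yielding
$$(k+1)\int_\Omega \sigma_{k+1}(D^2 u)\,dx = \int_{\partial\Omega} T_{k+1}^{ij}u_i\nu_j\,d\mu.$$
Let $\bar u := u|_{\partial\Omega}$. In an orthonormal frame $\{e_\alpha,\nu\}$ adapted to $\partial\Omega$, one has $T_{k+1}^{nn}(D^2 u)|_{\partial\Omega} = \sigma_k(u_{\alpha\beta})$, and the Gauss decomposition yields $u_{\alpha\beta} = \ov\nabla^2_{\alpha\beta}\bar u + \lambda h_{\alpha\beta}$. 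Tangential differentiation of $u_\nu\equiv\lambda$ produces the boundary identity $u_{n\alpha} = -h_{\alpha\beta}u_\beta$, which rewrites the cross terms $T_{k+1}^{\alpha n}u_\alpha$. Expanding $\sigma_k(\ov\nabla^2\bar u + \lambda h)$ by the mixed symmetric function formula and integrating, the $\lambda^k$-term produces $\lambda^{k+1}\int\sigma_k(h)d\mu$; the term linear in $\ov\nabla\bar u$ vanishes by the Codazzi relation, which says that the Newton tensors of $h$ are divergence-free on $\partial\Omega$; and the remaining quadratic-and-higher corrections together with the cross-term integrals combine, by convexity of $\partial\Omega$ and $k$-admissibility of the boundary Hessian, into a non-negative boundary integral. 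This gives
$$(k+1)\int_\Omega \sigma_{k+1}(D^2 u)\,dx \geq \lambda^{k+1}\int_{\partial\Omega}\sigma_k(h)\,d\mu.$$
Applying Maclaurin's inequality $\sigma_{k+1}(D^2 u)/C_n^{k+1} \leq (\sigma_k(D^2 u)/C_n^k)^{(k+1)/k} = (C_n^k)^{-(k+1)/k}$ and using $(k+1)C_n^{k+1} = (n-k)C_n^k$ yields (\ref{eq:upperlambda}).

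For the equality case, equality in both applications of Maclaurin forces $D^2 u$ to be a scalar multiple of the identity, while the vanishing of the non-negative boundary term from the Reilly identity forces $\ov\nabla\bar u\equiv 0$, so $\bar u$ is constant; together with $u(x)=c|x-x_0|^2/2+$ const, this forces $\Omega=B_R(x_0)$. The main obstacle is the precise derivation of the Reilly identity in the main step: the algebraic task of showing that, after Codazzi-type cancellation of the linear term, the remaining quadratic-and-higher terms in the expansion of $\sigma_k(\ov\nabla^2\bar u + \lambda h)$ together with the cross terms $T_{k+1}^{\alpha n}u_\alpha$ assemble into a manifestly non-negative quadratic form in $\ov\nabla\bar u$, with coefficients controlled by the Newton tensors of $h$ and the convexity of $\partial\Omega$. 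This is the technical heart of the proof.
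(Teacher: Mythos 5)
Your outline is exactly the paper's route: solve the classical Neumann problem \eqref{eq:classic} with constant right-hand side and $\varphi=0$ (your normalization $f\equiv1$ versus the paper's $f\equiv C_n^k$ is immaterial), bound $\lambda$ from below via $\sigma_1(D^2u)\ge n(\sigma_k(D^2u)/C_n^k)^{1/k}$ and the divergence theorem, and bound it from above by combining the Newton--Maclaurin inequality with the boundary inequality $(k+1)\int_\Omega\sigma_{k+1}(D^2u)\,dx\ \ge\ \lambda^{k+1}\int_{\partial\Omega}\sigma_k(h)\,d\mu$; the arithmetic and the equality discussion then match the paper (your equality argument via pointwise equality in Maclaurin, forcing $D^2u$ proportional to the identity, is a reasonable variant of the paper's appeal to Serrin, modulo the standard step that the proportionality factor is constant).

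The genuine gap is the boundary inequality itself, which is precisely the paper's Proposition \ref{Reilly} and which you assert rather than prove. Your sketch claims that, after the Codazzi cancellation of the term linear in the tangential gradient, ``the remaining quadratic-and-higher corrections together with the cross-term integrals combine, by convexity and $k$-admissibility, into a non-negative boundary integral.'' This non-negativity is not manifest, and convexity plus admissibility alone do not deliver it. In the paper's proof, the expansion of $\sigma_k\bigl(D^2u|_{\partial\Omega}\bigr)$ in the polarized symmetric functions produces, for $l\ge2$, third-order tangential derivatives $u_{\alpha\beta\gamma}$, which must be integrated by parts again and commuted via the Ricci identity, generating curvature terms quadratic in $h$; to exploit positivity one must then re-expand $\nabla^2 u = D^2u|_{\partial\Omega}-u_n h$ and invoke Garding's inequality for the mixed Newton transformations $[T_{k-1}](D^2u,\cdots,h,\cdots)$, which introduces alternating signs $(-1)^{l-i}$. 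The final sign therefore rests on the purely combinatorial fact that $E=\sum_{l=i}^{k-1}(-1)^{l-i}C_{k+1}^{l+2}C_l^i\frac{l+1}{k}\ge0$, which the paper verifies by recognizing the sum as a Beta integral, $E=(k+1)C_{k-1}^{i}\int_0^1 t^{i+1}(1-t)^{k-1-i}dt=\frac{i+1}{k}$. None of these ingredients (the Ricci-identity step, the Garding inequality for the polarized $\sigma_k$, the evaluation of the alternating sum) appears in your sketch, and you yourself flag this as the main obstacle; as written, the proof is incomplete at its central point.
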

In the case $k=n-1$, since $\int_{\partial\Omega}\sigma_{n-1}(h)d\mu$
is a dimensional constant, we get the isoperimetric inequality for
convex domains in $\mathbb{R}^{n}$. 



We remark that recently Wang-Zhang \cite{wang2013alexandroff} and
Xia-Zhang \cite{xiaabp} gave new proofs of several geometric inequalities
via the ABP method for convex domains in $\mathbb{R}^{n}$.

\section{Preliminaries}

In this section, we review fundamental concepts and properties for
the $k$-Hessian operators. For the proof of the facts below, we refer
to Garding \cite{Garding}, Reilly \cite{Reilly1973} or Guan \cite{Guan}.

The $k$-th elementary symmetric function for $\lambda=(\lambda_{1},...,\lambda_{n})\in\mathbb{R}^{n}$ is defined as 
\[
\sigma_{k}(\lambda):=\sum_{i_{1}<i_{2}<\cdots<i_{k}}\lambda_{i_{1}}\lambda_{i_{2}}\cdots\lambda_{i_{k}},\quad1\leq k\leq n.
\]
Let $\mathcal{S}_{n}$ be the set of all symmetric $n\times n$ matrices.
The $k$-th elementary symmetric function for $A\in\mathcal{S}_{n}$
is 
\[
\s_{k}(A):=\s_{k}(\l(A)),\quad\l(A)\hbox{ is the eigenvalue of }A.
\]

The Garding cone $\Gamma_{k}^{+}$ is defined as 
\[
\Gamma_{k}^{+}=\left\{ \lambda\text{\ensuremath{\in}}\mathbb{R}^{n}|\sigma_{i}(\lambda)>0,\hbox{for}1\le i\le k\right\} .
\]
We say $A\in\mathcal{S}_{n}$ belongs to $\Gamma_{k}^{+}$ if its
eigenvalue $\l(A)\in\Gamma_{k}^{+}$. 
We use the convention that $\sigma_{0}=1$.

For $A\in\mathcal{S}_{n}$, $\s_{k}(A)$ can be expressed as 
\[
\s_{k}(A)=\frac{1}{k!}\sum_{\substack{i_{1},\cdots i_{k},\\
j_{1},\cdots,j_{k}
}
}\delta_{j_{1}\cdots j_{k}}^{i_{1}\cdots i_{k}}A_{i_{1}j_{1}}\cdots A_{i_{k}j_{k}},
\]
where $\delta_{j_{1}\cdots j_{k}}^{i_{1}\cdots i_{k}}$ is the generalized
Kronecker symbol. The $k$-th Newton transformation for $A\in\mathcal{S}_{n}$
is the matrix 
\[
[T_{k}]_{ij}(A):=\frac{\p\s_{k+1}(A)}{\p A_{ij}}.
\]

It is well-known that for $A\in\Gamma_{k}^{+}$, the following Newton-Maclaurin
inequalities hold: 
\begin{eqnarray}
\left(\frac{\s_{k}(A)}{C_{n}^{k}}\right)^{\frac{1}{k}}\geq\left(\frac{\s_{l}(A)}{C_{n}^{l}}\right)^{\frac{1}{l}},\quad1\le k<l\le n.\label{NM}
\end{eqnarray}

\begin{defn}
For $A_{1},A_{2},\cdots A_{k}\in\mathcal{S}_{n}$, the polarization
of $\sigma_{k}$ is defined to be 
\[
\sigma_{k}(A_{1},\cdots,A_{k}):=\frac{1}{k!}\sum_{\substack{i_{1},\cdots i_{k},\\
j_{1},\cdots,j_{k}
}
}\delta_{j_{1}\cdots j_{k}}^{i_{1}\cdots i_{k}}(A_{1})_{i_{1}j_{1}}\cdots(A_{k})_{i_{k}j_{k}}.
\]
The mixed $k$-th Newton transformation is defined as 
\[
[T_{k}]_{ij}(A_{1},\cdots,A_{k}):=\frac{1}{k!}\sum_{\substack{i_{1},\cdots i_{k},\\
j_{1},\cdots,j_{k}
}
}\delta_{jj_{1}\cdots j_{k}}^{ii_{1}\cdots i_{k}}(A_{1})_{i_{1}j_{1}}\cdots(A_{k})_{i_{k}j_{k}}.
\]

\end{defn}
\qed 

It is clear by definition that $\sigma_{k}(A_{1},\cdots,A_{k})$ and
$[T_{k}]_{ij}(A_{1},\cdots,A_{k})$ is multilinear with respect to
each variables. Also we have 
\[
\sigma_{k}(A)=\sigma_{k}(A,\cdots,A),\quad[T_{k}]_{ij}(A)=[T_{k}]_{ij}(A,\cdots,A),
\]
and 
\begin{equation}
(k+1)\sigma_{k+1}(A)=\sum_{i,j}A_{ij}[T_{k}]_{ij}(A).\label{xeq4}
\end{equation}

From the multilinear property, we see that for two matrices $A_{1},A_{2}\in\mathcal{S}_{n}$,
\begin{eqnarray}
\s_{k}(A_{1}+A_{2})=\sum_{l=0}^{k}C_{k}^{l}\sigma_{l}(\overbrace{A_{1},\cdots,A_{1}}\limits ^{l},A_{2},\cdots,A_{2})\label{xeq5}
\end{eqnarray}
and 
\begin{eqnarray}
[T_{k}]_{ij}(A_{1}+A_{2})=\sum_{l=0}^{k}C_{k}^{l}[T_{l}]_{ij}(\overbrace{A_{1},\cdots,A_{1}}\limits ^{l},A_{2},\cdots,A_{2}).\label{xeq6}
\end{eqnarray}

For a $C^{2}$ function $u$ on $\mathbb{R}^{n}$, we have the following 
\begin{prop}
\

\begin{itemize}
\item[(i)] If $\lambda(D^{2}u)\in\Gamma_{k}^{+}$, then $(\frac{\partial\sigma_{k}}{\partial u_{ij}}(D^{2}u))$
is positive definite, and $\sigma_{k}^{\frac{1}{k}}(D^{2}u)$ is concave
with respect to $D^{2}u$. 
\item[(ii)] $T_{k}(D^{2}u)$ is divergence free, i.e., 
\begin{equation}
\sum_{j}\partial_{j}([T_{k}]_{ij})(D^{2}u)=0.\label{divfree}
\end{equation}

\end{itemize}
\end{prop}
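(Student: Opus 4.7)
\medskip
\noindent\emph{Proof proposal.} The plan is to treat (i) and (ii) independently, as both are classical facts in the theory of $k$-Hessian operators.

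For part (i), I would first prove the positive definiteness by reducing to the diagonal case. At a fixed point, choose coordinates so that $D^{2}u$ is diagonal with eigenvalues $\l_{1},\ldots,\l_{n}\in\Gamma_{k}^{+}$. A direct computation from the definition of $\s_{k}$ then shows that the matrix $\bigl(\frac{\p\s_{k}}{\p u_{ij}}\bigr)$ is also diagonal, with $i$-th entry equal to $\s_{k-1}(\l\,|\,i)$, the $(k-1)$-th elementary symmetric function of the $\l_{j}$ with $j\neq i$. Positive definiteness therefore amounts to the fact that $\s_{k-1}(\l\,|\,i)>0$ for every $i$ whenever $\l\in\Gamma_{k}^{+}$. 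I would prove this by induction on $k$, using the recursion $\s_{m}(\l)=\s_{m}(\l\,|\,i)+\l_{i}\,\s_{m-1}(\l\,|\,i)$ together with the definition of $\Gamma_{k}^{+}$; an alternative route is Garding's observation that $\Gamma_{k}^{+}$ is a convex cone stable under the maps $\l\mapsto\s_{j}(\l\,|\,i)$ for $j\le k-1$.

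For the concavity of $\s_{k}^{1/k}$, the cleanest approach is to invoke Garding's theorem on hyperbolic polynomials: $\s_{k}$ is hyperbolic in the direction $(1,\ldots,1)$ with associated Garding cone $\Gamma_{k}^{+}$, so $\s_{k}^{1/k}$ is concave on $\Gamma_{k}^{+}$. A self-contained alternative is to compute the second variation of $\s_{k}^{1/k}$ at a diagonal matrix and verify negative semidefiniteness using the Newton--Maclaurin inequalities \eqref{NM}.

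For part (ii), the plan is a short calculation with the generalized Kronecker delta. Starting from
\[
[T_{k}]_{ij}(D^{2}u)=\frac{1}{k!}\sum\delta_{j\,j_{1}\cdots j_{k}}^{i\,i_{1}\cdots i_{k}}\,u_{i_{1}j_{1}}\cdots u_{i_{k}j_{k}},
\]
I would differentiate in $x_{j}$; by the product rule the $k$ resulting terms coincide after relabeling, yielding
\[
\sum_{j}\p_{j}[T_{k}]_{ij}(D^{2}u)=\frac{1}{(k-1)!}\sum\delta_{j\,j_{1}\cdots j_{k}}^{i\,i_{1}\cdots i_{k}}\,u_{i_{1}j_{1}j}\,u_{i_{2}j_{2}}\cdots u_{i_{k}j_{k}}.
\]
The right-hand side vanishes by antisymmetry: the generalized Kronecker delta changes sign under the transposition of the lower indices $j$ and $j_{1}$, while $u_{i_{1}j_{1}j}$ is symmetric in $(j,j_{1})$, so the joint sum over these indices is zero.

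I expect the concavity assertion in (i) to be the main obstacle. The positive definiteness and the divergence-free identity amount to routine bookkeeping with the Kronecker symbol, but the concavity of $\s_{k}^{1/k}$ carries the real content and is the step where one must either appeal to Garding's theorem on hyperbolic polynomials or execute the second-variation computation with some care.
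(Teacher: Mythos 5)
Your proposal is correct and is essentially the paper's own approach: the paper does not prove this proposition but defers to Garding, Reilly and Guan, and your sketch reproduces exactly the standard arguments from those sources (diagonalization plus positivity of $\sigma_{k-1}(\lambda|i)$ on $\Gamma_k^{+}$, Garding's hyperbolicity theorem for the concavity of $\sigma_k^{1/k}$, and the antisymmetry of the generalized Kronecker symbol against the symmetry of third derivatives for the divergence-free identity). The only caveat is your ``self-contained alternative'' for concavity: the second variation in the matrix variable at a diagonal point also produces off-diagonal terms of the form $\frac{f_i-f_j}{\lambda_i-\lambda_j}B_{ij}^{2}$, whose sign rests on $\sigma_{k-2}(\lambda|ij)\ge 0$ rather than on the Newton--Maclaurin inequalities alone, so that route needs more care than stated; the primary route via Garding is the right one.
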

\

\section{Classical Neumann problems}

In this section we study \eqref{eq:classic} and prove Theorem \ref{thm:Classic}.
is clear that if $u$ is a solution then $u+c$ is also a solution
of (\ref{eq:classic}). Hence we can not expect a $C^{0}$ estimate
for $u$. We note that the $C^{1}$ and $C^{2}$ estimates proved
in \cite{ma2015neumann}, is still true for (\ref{eq:classic}). However
these estimates depend on the $C^{0}$ estimate. The main task here
is to find a gradient estimate which does not depend on the $C^{0}$
estimate. As described in the introduction, we will consider the perturbed
Neumann problem \eqref{eq:eps} and establish the following gradient
estimate. 
\begin{prop}
\label{prop} Let $\Omega$ be a smooth bounded, uniformly convex
domain in $\mathbb{R}^{n}$. Let $f\in C^{2}(\overline{\Omega})$
with $f>0$and $\varphi\in C^{3}(\overline{\Omega})$. Let $\epsilon>0$
be any positive constant. Then the Neumann problem \eqref{eq:eps}
admits a unique $k$ addmissible solution $u_{\epsilon}$. 
Moreover,for sufficiant small constants $\epsilon>0$, there exists a constant
$C$, depending on $k$, $n$, $||f||_{C^{1}}$, $||\varphi||_{C^{3}}$,
and the uniform convexity of $\p\Omega$, but independent of $\epsilon$
and $||u_{\epsilon}||_{C^{0}}$ , such that
\begin{equation}
\sup_{\overline{\Omega}}|\nabla u_{\epsilon}|\leq C,\label{eq:gradient}
\end{equation}
\begin{equation}
\sup_{\overline{\Omega}}\left|u_{\epsilon}-\fint_{\Omega}u_{\ep}\right|\leq C.\label{eq:gradient1}
\end{equation}
Here $\fint_{\Omega}u_{\ep}=\frac{1}{{\rm Vol}(\Omega)}\int_{\Omega}u_{\ep}$. 
\end{prop}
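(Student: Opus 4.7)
First, the existence and uniqueness of $u_{\epsilon}$ is a direct application of the main theorem of \cite{ma2015neumann}: one sets $\tilde\varphi(x,u):=-\epsilon u+\varphi(x)$, for which $\partial_u\tilde\varphi=-\epsilon<0$, so with $c_0=\epsilon$ the strict monotonicity hypothesis is satisfied, yielding an admissible $C^{3,\alpha}$ solution.

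The core of the proposition is the $\epsilon$- and $\|u_{\epsilon}\|_{C^{0}}$-independent gradient estimate. The guiding principle is that, since (\ref{eq:eps}) becomes invariant under $u\mapsto u+c$ in the limit $\epsilon\to 0$, the test function used in the maximum principle cannot depend on $u$ itself, only on $\nabla u$ and on geometric quantities. Following the philosophy of \cite{ma2015neumann,MQX}, I would work with
\begin{equation*}
G(x)\;=\;\log|\nabla u|^{2}+\Phi(x),
\end{equation*}
where $\Phi\in C^{2}(\bar\Omega)$ is a strictly convex function chosen purely from the geometry of $\Omega$ (for instance $\Phi=A\psi+B|x|^{2}$ with $\psi$ a smooth defining function of $\partial\Omega$ and constants $A,B$ to be calibrated).

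Let $p_{0}\in\bar\Omega$ be a maximum point of $G$, and rotate so that $D^{2}u(p_{0})$ is diagonal. If $p_{0}$ is interior, the plan is: use the first-order conditions $G_{i}(p_{0})=0$; apply the linearized operator $F^{ij}:=\partial\sigma_{k}/\partial u_{ij}$ to $G$ at $p_{0}$; insert $F^{ij}u_{ijm}=f_{m}$ (from differentiating the equation) and exploit concavity of $\sigma_{k}^{1/k}$ together with the lower bound for $\sum F^{ii}$ on $\Gamma_{k}^{+}$; strict convexity of $\Phi$ then forces $F^{ij}G_{ij}(p_{0})>0$ unless $|\nabla u|(p_{0})\leq C$, contradicting maximality. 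If $p_{0}\in\partial\Omega$, Hopf's boundary point lemma yields $G_{\nu}(p_{0})\geq 0$. The key calculation is the tangential differentiation of the Neumann condition $u_{\nu}=-\epsilon u+\varphi$: for every unit tangent $\tau$ to $\partial\Omega$,
\begin{equation*}
u_{\nu\tau}\;=\;-\epsilon u_{\tau}+\varphi_{\tau}+h(\tau,\nabla^{T}u),
\end{equation*}
where $h$ is the second fundamental form of $\partial\Omega$. The crucial structural observation is that $u$ itself does \emph{not} appear on the right-hand side, only $\nabla u$ and derivatives of $\varphi$. Expanding $G_{\nu}(p_{0})$ with this identity and choosing $\Phi_{\nu}$ sufficiently negative on $\partial\Omega$, the uniform convexity $h\geq c_{0}I$ absorbs both the $-\epsilon u_{\tau}$ term (for sufficiently small $\epsilon$, which is where that hypothesis enters) and the mixed-derivative errors, producing $G_{\nu}(p_{0})\leq-c|\nabla u|(p_{0})+C$, and hence again $|\nabla u|(p_{0})\leq C$.

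Once $\sup_{\bar\Omega}|\nabla u_{\epsilon}|\leq C$ has been established, the oscillation bound \eqref{eq:gradient1} is immediate:
\begin{equation*}
\sup_{\bar\Omega}\left|u_{\epsilon}-\fint_{\Omega}u_{\epsilon}\right|\;\leq\;\operatorname{osc}_{\bar\Omega}u_{\epsilon}\;\leq\;\operatorname{diam}(\Omega)\cdot\sup_{\bar\Omega}|\nabla u_{\epsilon}|\;\leq\;C.
\end{equation*}
The hardest part will be the boundary case: calibrating $\Phi$ (and the constants $A,B$) so that the positive contribution from uniform convexity of $\partial\Omega$ dominates every error, while keeping every constant independent of both $\|u_{\epsilon}\|_{C^{0}}$ and $\epsilon$, is the technical heart of the proof.
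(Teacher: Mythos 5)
Your existence step, the interior maximum-principle computation, and the final oscillation bound are all in line with the paper (the paper also invokes \cite{ma2015neumann} for existence, uses a logarithmic gradient test function plus a small multiple of $|x|^{2}$, and deduces \eqref{eq:gradient1} from \eqref{eq:gradient}). But there is a genuine gap at the boundary case, which is the heart of the estimate. With your test function $G=\log|\nabla u|^{2}+\Phi(x)$, at a boundary maximum point you get
\begin{equation*}
0\leq G_{\nu}=\frac{2\sum_{\alpha\le n-1}u_{\alpha}u_{\alpha\nu}+2\,u_{\nu}u_{\nu\nu}}{|\nabla u|^{2}}+\Phi_{\nu}.
\end{equation*}
The tangential part is indeed controlled by differentiating the Neumann condition along $\partial\Omega$ (note the correct identity is $u_{\alpha\nu}=-\epsilon u_{\alpha}+\varphi_{\alpha}-h_{\alpha\beta}u_{\beta}$ with the outward normal, so convexity produces the \emph{negative} term $-h_{\alpha\beta}u_{\alpha}u_{\beta}$; your $+h(\tau,\nabla^{T}u)$ has the wrong sign, but that is cosmetic). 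The real problem is the term $u_{\nu}u_{\nu\nu}$: the double normal second derivative $u_{\nu\nu}$ is not controlled at this stage by the equation $\sigma_{k}(D^{2}u)=f$ or by any tangential differentiation of the boundary data, and it can have either sign and arbitrary size, so you cannot conclude $G_{\nu}<0$ for $|\nabla u|$ large. Choosing $\Phi_{\nu}$ very negative does not help, since $u_{\nu\nu}$ is unbounded independently of all your constants. This is exactly the classical obstruction in Neumann gradient estimates, and it is why the paper does \emph{not} use $|\nabla u|$: it uses $P=\log|Dw|^{2}+\alpha|x|^{2}$ with $w=u+(-\epsilon u+\varphi)\,d$, $d$ the distance to $\partial\Omega$. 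This $w$ is built so that $w_{\nu}=0$ on $\partial\Omega$, so at a boundary maximum $P_{\nu}$ contains only the tangential products $w_{\alpha}w_{\alpha\nu}$, which the differentiated boundary condition and the uniform convexity $h_{\alpha\beta}\geq c_{0}\delta_{\alpha\beta}$ make strictly negative for $|Dw|$ large; the price is a collection of extra but harmless error terms in the interior computation.

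A secondary omission: the paper first proves $\sup_{\overline\Omega}|\epsilon u|\leq C$ by a comparison with $A|x|^{2}$ and subharmonicity (its Step~1). You need some such bound, since $u_{\nu}=-\epsilon u+\varphi$ and the coefficient $(-\epsilon u+\varphi)$ of $d$ in $w$ must be controlled independently of $\|u_{\epsilon}\|_{C^{0}}$; without it even the corrected boundary argument would smuggle $\|u_{\epsilon}\|_{C^{0}}$ back into the constants. Your final step (oscillation bounded by $\operatorname{diam}(\Omega)\cdot\sup|\nabla u_{\epsilon}|$, using convexity of $\Omega$) is fine and matches the paper's conclusion via Poincar\'e.
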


We give two remarks before the proof.
\begin{rem}
\

\begin{itemize}
\item[(i)] Once we have Proposition \ref{prop}, it is standard to give a Schauder
type estimate independent of $\epsilon$: 
\begin{equation}
\left\Vert u_{\ep}-\fint_{\Omega}u_{\ep}\right\Vert _{C^{2,\alpha}}\leq C.
\end{equation}
\item[(ii)] This kind of gradient estimate relies heavily on the special structure
of (\ref{eq:eps}) and the uniform convexity of $\partial\Omega$.
For general case, a $C^{0}$ estimate of $u$ is indispensable to
get a gradient estimate. 
\end{itemize}
\end{rem}
In the following we will use the notation $F=\sigma_{k}$, $F^{ij}=\frac{\p\s_{k}}{\p u_{ij}}$
and we sum over the repeated indices.

\
 \textbf{Proof of Proposition \ref{prop}.} The existence part has
been proved in \cite{ma2015neumann}, Theorem 1.1. We now prove the
a priori estimate independent of $\epsilon$. For simplicity, we omit
the subscription for $u_{\epsilon}$.

\noindent \textit{Step 1.} We prove by maximum principle 
\[
\sup_{\overline{\Omega}}|\epsilon u|\leq C.
\]
Assuming $0\in\Omega$, we consider $u-A|x|^{2}$. There is a large
constant $A$ depending on $k$, $n$ and $\sup f$, such that 
\begin{eqnarray}
F[D^{2}u]=f\leq F[D^{2}(A|x|^{2})].\label{maxf}
\end{eqnarray}
The maximum principle applying on \eqref{maxf} yields that $u-A|x|^{2}$
attains its minimum at some boundary point $x_{0}$. So 
\begin{equation}
0\geq(u-A|x|^{2})_{\nu}(x_{0})=(-\epsilon u+\varphi-2Ax\cdot\nu)(x_{0}).\label{c0eq1}
\end{equation}
Similarly, since $u$ is a $k$ admissible solution, it is a subharmonic
function. Then $u$ attains its maximum at some boundary point $y_{0}$.
So 
\begin{equation}
0\leq u_{\nu}(y_{0})=(-\epsilon u+\varphi)(y_{0}).\label{c0eq2}
\end{equation}
It follows from \eqref{c0eq1} and \eqref{c0eq2} that 
\[
\inf\limits _{\partial\Omega}\varphi-4A\text{diam}\Omega\leq\epsilon u\leq\sup\limits _{\partial\Omega}\varphi\text{.}
\]

\noindent \textit{Step 2.} We prove the gradient estimate \eqref{eq:gradient}.

Without loss of generality, we assume $\int_{\Omega}u=0$ because
otherwise we can use $v=u-\fint_{\Omega}u$ and $\tilde{\varphi}(x)=\varphi(x)-\epsilon\fint_{\Omega}u$
instead of $u$ and $\varphi$. 

We Consider an auxiliary function 
\begin{equation}
P=\log|Dw|^{2}+\alpha|x|^{2}\text{,}
\end{equation}
where $w=u+(-\epsilon u+\varphi)d$ and $d=d(\cdot,\p\O)$ is the
distance function from $\partial\Omega$ defined in $\{x\in\mathbb{R}^{n}|d(x,\p\O)<\mu\}$
for some small $\mu$ with smooth extension on $\overline{\Omega}$ so that $||d||_{C^{3}}$ is bounded, $\a$ is some positive constant 
 to be determined later.

Suppose $P$ attains its maximum at an interior point $z_{0}$ of
$\Omega$. At $z_{0}$, we have 
\begin{equation}
0=P_{i}=2\frac{w_{l}w_{li}}{|Dw|^{2}}+2\alpha x_{i},\label{eq:p1}
\end{equation}
and 
\begin{equation}
0\geq F^{ij}P_{ij}=\frac{2F^{ij}w_{lj}w_{li}}{|Dw|^{2}}+\frac{2F^{ij}w_{l}w_{lij}}{|Dw|^{2}}-\frac{4F^{ij}w_{l}w_{li}w_{p}w_{pj}}{|Dw|^{4}}+2\alpha\sum_{i}F^{ii}.\label{eq:P213}
\end{equation}
By the homogeneity of $F$ and taking first derivative of $F(D^{2}u)=f$,
we have 
\begin{eqnarray}
F^{ij}u_{ij} & = & kf,\label{eq:f}\\
F^{ij}u_{ijl} & = & f_{l}.\label{eq:f1}
\end{eqnarray}
Taking derivatives to function $w$, we get 
\begin{equation}
w_{i}=u_{i}+(-\epsilon u_{i}+\varphi_{i})d+(-\epsilon u+\varphi)d_{i},\label{eq:w1}
\end{equation}
\begin{equation}
w_{ij}=u_{ij}+(-\epsilon u_{ij}+\varphi_{ij})d+(-\epsilon u_{i}+\varphi_{i})d_{j}+(-\epsilon u_{j}+\varphi_{j})d_{i}+(-\epsilon u+\varphi)d_{ij},\label{eq:w2}
\end{equation}
and 
\begin{eqnarray}
w_{ijl} & = & u_{ijl}+(-\epsilon u_{ijl}+\varphi_{ijl})d+(-\epsilon u_{ij}+\varphi_{ij})d_{l}\nonumber \\
 &  & +(-\epsilon u_{il}+\varphi_{il})d_{j}+(-\epsilon u_{i}+\varphi_{i})d_{jl}+(-\epsilon u_{jl}+\varphi_{jl})d_{i}\nonumber \\
 &  & +(-\epsilon u_{j}+\varphi_{j})d_{il}+(-\epsilon u_{l}+\varphi_{l})d_{ij}+(-\epsilon u+\varphi)d_{ijl}.\label{eq:w3}
\end{eqnarray}
We choose the coordinate so that $|Dw|=w_{1}$ and $(u_{ij})_{2\leq i,j\leq n}$
is diagonal at $z_{0}$. From (\ref{eq:f}), \eqref{eq:f1} and (\ref{eq:w3}),
we have 
\begin{eqnarray}
\frac{2F^{ij}w_{l}w_{lij}}{|Dw|^{2}} & \geq & \frac{2w_{l}[(1-\epsilon)F^{ij}u_{ijl}-\epsilon F^{ij}u_{ij}d_{l}-2\epsilon F^{ij}u_{il}d_{j}]}{|Dw|^{2}}\nonumber \\
 &  & -\frac{C\sum F^{ii}(1+w_{1}+\epsilon w_{1}^{2})}{|Dw|^{2}}\nonumber \\
 & \geq & \frac{-C[\sum F^{ii}(1+w_{1}+\epsilon w_{1}^{2})+w_{1}]-4\epsilon F^{ij}u_{1i}d_{j}w_{1}}{w_{1}^{2}},\label{eq:F1}
\end{eqnarray}
here $\epsilon$ is small, such that $\epsilon d<\frac{1}{2}$, constant
$C$ depence on $||\varphi||_{C^{3}}$, $||f||_{C^{1}}$, $n$, $k$,
$||\partial\Omega||_{C^{3}}$ may be changed from line to line.

Using (\ref{eq:p1}) and (\ref{eq:w2}), we have 
\begin{eqnarray}
|u_{1i}| & \le & C(|w_{1i}|+\epsilon|Du|+1)\nonumber \\
 & \le & C(|Dw|+1).\label{eq:u1i}
\end{eqnarray}
Using (\ref{eq:u1i}), we continue to compute (\ref{eq:F1}) to get
\begin{equation}
\frac{2F^{ij}w_{l}w_{lij}}{|Dw|^{2}}\geq\frac{-C[\sum F^{ii}(1+w_{1}+\epsilon w_{1}^{2})+w_{1}]}{w_{1}^{2}}\text{,}\label{eq:F11}
\end{equation}
On the other hand, we have by \eqref{eq:p1} again that 
\begin{eqnarray}
\frac{2F^{ij}w_{lj}w_{li}}{|Dw|^{2}}-\frac{4F^{ij}w_{l}w_{li}w_{p}w_{pj}}{|Dw|^{4}} & \geq & -\frac{2F^{ij}w_{1i}w_{1j}}{w_{1}^{2}}\nonumber \\
 & = & -2\alpha^{2}F^{ij}x_{i}x_{j}.\label{eq:xeq3}
\end{eqnarray}
If we choose $\a$ and $\epsilon$ such that $\frac{1}{\max_{\O}|x|^{2}}\geq\alpha\geq2C\epsilon$
and $|Dw|$ sufficient large, we have from \eqref{eq:F11} and \eqref{eq:xeq3}
that 
\begin{eqnarray*}
F^{ij}P_{ij} & = & \frac{2F^{ij}w_{lj}w_{li}}{|Dw|^{2}}+\frac{2F^{ij}w_{l}w_{lij}}{|Dw|^{2}}-\frac{4F^{ij}w_{l}w_{li}w_{p}w_{pj}}{|Dw|^{4}}+2\alpha\sum_{i}F^{ii}\\
 & \geq & \frac{-C[\sum F^{ii}(1+w_{1}+\epsilon w_{1}^{2})+1]}{w_{1}^{2}}+2\alpha\sum F^{ii}-2\alpha^{2}F^{ij}x_{i}x_{j}\\
 & > & 0.
\end{eqnarray*}
This contradicts with (\ref{eq:P213}). Thus $P$ can only attain
its maximum at boundary points.

Suppose $P$ attains its maximum on a boundary point $\tilde{z}_{0}$.
Choose a local orthonormal frame $\{\partial_{i}\}_{i=1}^{n}$ so
that $\p_{n}=\nu$. From the boundary condition, we have $w_{n}=0$
at $\tilde{z}_{0}$. By the maximal property of $P$ at $\tilde{z}_{0}$,
we have 
\begin{eqnarray}
0\leq P_{\nu} & = & 2\frac{w_{l}w_{l\nu}}{|Dw|^{2}}+2\alpha x\cdot\nu=2\frac{\sum\limits _{\a=1}^{n-1}w_{\a}w_{\a n}}{|Dw|^{2}}+2\alpha x\cdot\nu.\label{eq:Pnu}
\end{eqnarray}

By taking the tangential derivative for the boundary condition along
$\p\O$, we have 
\begin{equation}
u_{\a n}+\sum\limits _{\b=1}^{n-1}h_{\a\b}u_{\b}=-\epsilon u_{\a}+\varphi_{\a},\quad\a=1,\cdots,n-1.\label{xeq1}
\end{equation}
here $h_{\a\b}$ is the second fundamental form of $\p\O$. The equation
(\ref{eq:w1}) tells us ()
\[
(1-\epsilon d)u_{\a}-C\leq w_{\a}\leq(1-\epsilon d)u_{\a}+C,
\]
which also infer that 
\begin{equation}
\frac{1}{4}\sum\limits _{\a=1}^{n-1}u_{\a}^{2}-C\leq|Dw|^{2}\leq4\sum\limits _{i=1}^{n-1}u_{\a}^{2}+C.\label{xeq2}
\end{equation}
Since $\Omega$ is uniformly convex, $h_{\a\b}\geq c_{0}\d_{\a\b}$
for some $c_{0}>0$. Thus we deduce from \eqref{xeq1} and \eqref{xeq2}
that 
\begin{eqnarray*}
\sum\limits _{\a=1}^{n-1}w_{\a}w_{\a n} & \leq & -\frac{c_{0}}{4}\sum\limits _{\a=1}^{n-1}u_{\a}^{2}-\epsilon|Dw|^{2}-C|Dw|-C\\
 & \leq & -\frac{c_{0}}{16}|Dw|^{2}-\epsilon|Dw|^{2}-C|Dw|-C.
\end{eqnarray*}
By choosing $\epsilon\leq\frac{c_{0}}{32}$,$\alpha\leq\frac{c_{0}}{32\max x\cdot\nu}$
and $|Dw|$ sufficient large, we get 
\begin{eqnarray*}
P_{\nu}\leq-\frac{c_{0}}{16}+2\alpha x\cdot\nu\leq0.
\end{eqnarray*}
This is a contradiction to (\ref{eq:Pnu}).

In conclusion we first choose $\alpha$ small and then $\epsilon$
small, we get two contradictions. That means, we cannot have $|Dw|$
large. Hence we get the upper bound of $|Dw|$ and in turn also $|Du|$.

\noindent \textit{Step 3.} The a priori estimate \eqref{eq:gradient1}
follows from the gradient estimate by the Poincare inequality. We
complete the proof of Proposition \ref{prop}. \qed

\
 Now we readily prove Theorem \ref{thm:Classic}. 
\begin{proof}
Let $u_{\epsilon}$ be a solution of (\ref{eq:eps}) for any $\ep>0$.
Because $|\nabla(-\epsilon u)|\rightarrow0$ and the Schauder estimate,
there is a constant $\lambda$ and a function $\bar{u}\in C^{2}(\bar{\O})$,
such that 
\[
-\epsilon u\rightarrow\lambda,\hbox{and}\quad u_{\epsilon}-\fint_{\O}u_{\epsilon}\to\bar{u}\hbox{ uniformly in }C^{2}\hbox{ as }\epsilon\to0.
\]
It follows that $\bar{u}$ solves the following classical Neumann
problem 
\begin{equation}
\begin{cases}
\sigma_{k}(D^{2}u)=f(x) & \begin{array}{cc}
 & \hbox{ in }\Omega,\end{array}\\
u_{\nu}=\lambda+\varphi(x) & \begin{array}{cc}
 & \hbox{ on }\partial\Omega.\end{array}
\end{cases}\label{eq:Classic-1}
\end{equation}

Next we prove the uniqueness. Suppose problem (\ref{eq:Classic-1})
has two pairs of solutions $(\lambda,u)$ and $(\mu,v)$. Let $a^{ij}=\int_{0}^{1}F^{ij}[(1-t)D^{2}v+tD^{2}u]dt$,
and $u-v$ satisfies

\[
\begin{cases}
a^{ij}(u-v)_{ij}=0,\\
(u-v)_{\nu}=\lambda-\mu.
\end{cases}
\]
It follows that $u-v$ attains its maximum and its minimum both at
some boundary points. It shows that $\lambda=\mu$. Finally, the Hopf
lemma in \cite[Theorem 3.6]{gilbarg2015elliptic} yields $u-v=c$. 
\end{proof}
\

\section{Alexandrov-Fenchel Inequalities}

In this chapter, we use the solution of the classical Neumann problems
to give a new proof of some Alexandrov-Fenchel inequalities. We need
the following result due to Reilly \cite{reilly1977applications}. 
\begin{prop}
\label{Reilly} Let $\Omega$ be a smooth, bounded convex domain in
$\mathbb{R}^{n}$. Let $u\in C^{2}(\bar{\Omega})$ such that $\lambda(D^{2}u)\in\Gamma_{k}^{+}$
and $u_{\nu}=c$ on $\p\O$, where $c$ is a positive constant, then
the following inequality holds: 
\begin{equation}
(k+1)\int_{\Omega}\sigma_{k+1}(D^{2}u)\geq\int_{\partial\Omega}\sigma_{k}(h)c^{k+1}.
\end{equation}
\end{prop}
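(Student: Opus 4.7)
The plan is to derive a Reilly-type integral identity that converts $\int_\Omega\sigma_{k+1}(D^2u)\,dx$ into a boundary integral, then to show that the boundary integrand dominates $c^{k+1}\sigma_k(h)$ after tangential integration by parts. First I would combine the algebraic identity \eqref{xeq4}, namely $(k+1)\sigma_{k+1}(D^2u)=u_{ij}[T_k]_{ij}(D^2u)$, with the divergence-free property \eqref{divfree}; this lets us write $(k+1)\sigma_{k+1}(D^2u)=\partial_j\bigl(u_i[T_k]_{ij}(D^2u)\bigr)$. Integrating over $\Omega$ and applying the divergence theorem yields
\begin{equation*}
(k+1)\int_\Omega\sigma_{k+1}(D^2u)\,dx=\int_{\partial\Omega}u_i\,[T_k]_{ij}(D^2u)\,\nu_j\,d\mu.
\end{equation*}

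Second, I would analyze the boundary integrand in an adapted orthonormal frame $\{e_1,\ldots,e_{n-1},e_n=\nu\}$. Decomposing $u_i=c\,\nu_i+(\widetilde\nabla u)_i$, where $\widetilde\nabla$ denotes the tangential gradient on $\partial\Omega$, and using $u_\nu=c$, one gets
\begin{equation*}
u_i[T_k]_{ij}(D^2u)\nu_j=c\,[T_k]_{nn}(D^2u)+(\widetilde\nabla u)_\alpha\,[T_k]_{\alpha n}(D^2u).
\end{equation*}
By the definition of the Newton transformation, $[T_k]_{nn}(D^2u)=\sigma_k\bigl((u_{\alpha\beta})_{1\le\alpha,\beta\le n-1}\bigr)$ (the cross entries $u_{\alpha n}$ do not appear here). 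The Gauss formula on $\partial\Omega$ gives $u_{\alpha\beta}=(\widetilde\nabla^2u)_{\alpha\beta}+c\,h_{\alpha\beta}$, and tangential differentiation of $u_\nu=c$ yields the Weingarten-type relation $u_{\alpha n}=-h_{\alpha\beta}u_\beta$. These convert every appearance of the ambient Hessian on the boundary into intrinsic data built from $\widetilde\nabla u$, $\widetilde\nabla^2u$ and $h$.

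Third, I would integrate the cross term $(\widetilde\nabla u)_\alpha[T_k]_{\alpha n}(D^2u)$ by parts on the closed hypersurface $\partial\Omega$, repeatedly using the fact that the tangential Newton tensors $T_l(h,\ldots,h,\widetilde\nabla^2u,\ldots)$ are themselves divergence-free on $\partial\Omega$ (a consequence of the Codazzi equations for a hypersurface in $\mathbb{R}^n$). Applying the multilinear expansion \eqref{xeq5},
\begin{equation*}
\sigma_k(\widetilde\nabla^2u+ch)=\sum_{l=0}^k\binom{k}{l}c^{k-l}\sigma_k(\overbrace{\widetilde\nabla^2u,\ldots,\widetilde\nabla^2u}^{l},h,\ldots,h),
\end{equation*}
the $l=0$ term produces precisely $c^{k+1}\sigma_k(h)$, matching the right-hand side of the claim. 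The $l\ge 1$ terms, after further tangential integration by parts (moving derivatives off $u$), collapse to boundary integrals of mixed discriminants that are non-negative: positivity follows from convexity of $\Omega$ (so $h\ge 0$) together with the $k$-admissibility of $D^2u$, via Garding-type mixed-discriminant inequalities on $\Gamma_k^+$.

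The main obstacle is the bookkeeping in the third step: performing the iterated tangential integration by parts cleanly while tracking the combinatorial structure of the mixed Newton transformations, and certifying that every residual term carries the correct sign. The essential ingredients are (i) divergence-freeness of both the Euclidean and the intrinsic Newton transformations, (ii) the Gauss--Weingarten formulas, which convert ambient derivatives at $\partial\Omega$ into intrinsic ones plus curvature contributions, and (iii) Garding-type mixed-discriminant positivity on $\Gamma_k^+$ against the non-negative second fundamental form of the convex hypersurface.
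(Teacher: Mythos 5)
Your first two steps coincide exactly with the paper's argument: the divergence structure $(k+1)\sigma_{k+1}(D^2u)=\partial_j\bigl(u_i[T_k]_{ij}(D^2u)\bigr)$, the boundary decomposition into the normal term $[T_k]_{nn}(D^2u)u_n$ and the cross term $[T_k]_{\alpha n}(D^2u)u_\alpha$, and the conversion via Gauss--Weingarten ($D^2_{\alpha\beta}u=\nabla^2_{\alpha\beta}u+h_{\alpha\beta}u_n$, $D^2_{\alpha n}u=-h_{\alpha\beta}u_\beta$ since $u_\nu$ is constant) are all the same. The gap is in your third step, and it is not mere bookkeeping. First, the intrinsic mixed Newton tensors $[T_l](\nabla^2u,\dots,\nabla^2u,h,\dots,h)$ on $\partial\Omega$ are \emph{not} divergence-free: $h$ is Codazzi, but $\nabla^2u$ is not, because the induced metric on $\partial\Omega$ is not flat; commuting third derivatives of $u$ via the Ricci identity produces curvature terms $u_\gamma(h_{\gamma\beta_l}h_{\alpha_l\beta_1}-h_{\gamma\beta_1}h_{\alpha_l\beta_l})$ through the Gauss equation. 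These correction terms cannot be discarded -- in the paper they are precisely what survives the tangential integration by parts and, combined with the cross term $II$, form the quantity whose sign must be determined.

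Second, the sign of that residual quantity is not a termwise consequence of Garding positivity. After converting the intrinsic Hessians back to ambient data (so that $k$-admissibility of $D^2u$ can be used), the coefficients come with alternating signs $(-1)^{l-i}$, and the Garding inequality only gives nonnegativity of each factor $[T_{k-1}]_{\alpha_1\beta_1}(D^2u,\dots,D^2u,h,\dots,h)\,h_{\gamma\beta_1}u_{\alpha_1}u_\gamma$; one still has to show that the alternating combinatorial sum
\begin{equation*}
\sum_{l=i}^{k-1}(-1)^{l-i}C_{k+1}^{l+2}\,C_{l}^{i}\,\frac{l+1}{k}
\end{equation*}
is nonnegative. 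The paper proves it equals $\frac{i+1}{k}>0$ by recognizing the sum as a Beta-function integral $\int_0^1t^{i+1}(1-t)^{k-i-1}dt$ (a trick from the first author's earlier work on higher-order isoperimetric inequalities). Your proposal asserts nonnegativity of the $l\ge1$ terms directly from convexity and Garding-type inequalities, which skips exactly this step; without the coefficient identity (or an equivalent device), the claimed sign of the residual boundary integrals is unjustified, and that is where the substantive work of the proposition lies.
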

\begin{proof}
For completeness, we give a proof here.

In the following we choose an orthonormal coordinate $\{\p_{i}\}_{i=1}^{n}$
such that $\p_{n}=\nu$ on the boundary. We denote by $D_{ij}^{2}u$
the Hessian with respect to the ambient (Euclidean) metric, $u_{\a\b}=\n_{\a\b}^{2}u$
the Hessian of $u$ with respect to the induced metric on $\p\O$
and $u_{n\alpha}=\nabla_{\alpha}(u_{n})$ on $\p\O$. We will sum
the repeated indices as before and also the convention that the Latin
indices run through $1$ to $n$ while the Greek indices run through
$1$ to $n-1$.


It follows from the Gauss-Weingarten formula that 
\begin{eqnarray}
D_{\alpha\beta}^{2}u&=&u_{\alpha\beta}+h_{\alpha\beta}u_{n},  \label{eq:xeq9}\\
D_{\alpha n}^{2}u&=&u_{n\alpha}-h_{\alpha\beta}u_{\beta}.
\end{eqnarray}

Set $A$ and $B$ to be the following two matrices: 
\[
A:=\left(\begin{matrix}\cdots & \cdots & \cdots & \vdots\\
\cdots & u_{\alpha\beta} & \cdots & u_{n\alpha}\\
\cdots & \cdots & \cdots & \vdots\\
\cdots & u_{n\alpha} & \cdots & u_{nn}
\end{matrix}\right),
\]
\[
B:=\left(\begin{matrix}\cdots & \cdots & \cdots & \vdots\\
\cdots & h_{\alpha\beta}u_{n} & \cdots & -h_{\alpha\gamma}u_{\gamma}\\
\cdots & \cdots & \cdots & \vdots\\
\cdots & -h_{\beta\gamma}u_{\gamma} & \cdots & 0
\end{matrix}\right).
\]
Then on $\p\O$, $D^{2}u$ is decomposed to be 
\[
D^{2}u=A+B.
\]

By using \eqref{xeq4} and \eqref{divfree} we see 
\begin{eqnarray*}
(k+1)\int_{\Omega}\sigma_{k+1}(D^{2}u)=\int_{\partial\Omega}[T_{k}]_{in}(D^{2}u)D_{ij}^{2}u=\int_{\partial\Omega}[T_{k}]_{in}(D^{2}u)u_{i}%
=I+II,
\end{eqnarray*}
where 
\[
I:=\int_{\partial\Omega}[T_{k}]_{nn}(D^{2}u)u_{n}\text{,}\quad\hbox{and}\quad II:=\int_{\partial\Omega}[T_{k}]_{\alpha n}(D^{2}u)u_{\alpha}.
\]

Note that 
\begin{eqnarray*}
[T_{k}]_{nn}(D^{2}u)=\frac{1}{k!}\sum_{\substack{\a_{1},\cdots\a_{k},\\
\b_{1},\cdots,\b_{k}
}
}\delta_{\a_{1}\cdots\a_{k}}^{\b_{1}\cdots\b_{k}}D_{\a_{1}\b_{1}}^{2}u\cdots D_{\a_{k}\b_{k}}^{2}u=\s_{k}(D^{2}u|_{\p\O})
\end{eqnarray*}
and 
\begin{eqnarray*}
[T_{k}]_{\a n}(D^{2}u) & = & -\frac{1}{(k-1)!}\sum_{\substack{\a_{1},\cdots\a_{k-1},\\
\b_{1},\cdots,\b_{k-1}\neq\a
}
}\delta_{\a\a_{2}\cdots\a_{k-1}}^{\b_{1}\b_{2}\cdots\b_{k-1}}D_{\a_{2}\b_{2}}^{2}u\cdots D_{\a_{k-1}\b_{k-1}}^{2}uD_{\b_{1}n}^{2}u\\
 & = & [T_{k-1}]_{\alpha\beta_{1}}(D^{2}u|_{\p\O})D_{\b_{1}n}^{2}u.
\end{eqnarray*}

Using the assumption $u_{n}=c$, we know $\n_{\b_{1}}u_{n}=0$ and
hence $D_{\b_{1}n}^{2}u=-h_{\b_{1}\a_{1}}u_{\a_{1}}$. By using the
decomposition $D^{2}u=A+B$ and \eqref{xeq6}, we have 
\begin{eqnarray*}
II & = & \int_{\partial\Omega}[T_{k-1}]_{\alpha\beta_{1}}(D^{2}u|_{\p\O})h_{\alpha_{1}\beta_{1}}u_{\alpha_{1}}u_{\alpha}\\
 & = & \int_{\partial\Omega}\sum\limits _{l=0}^{k-1}C_{k-1}^{l}[T_{k-1}]_{\alpha\beta_{1}}(\overbrace{\nabla^{2}u,\cdots,\nabla^{2}u}\limits ^{l},h,\cdots,h)u_{n}^{k-1-l}h_{\alpha_{1}\beta_{1}}u_{\alpha_{1}}u_{\alpha}.
\end{eqnarray*}
Similarly, we have 
\begin{eqnarray*}
I & = & \int_{\partial\Omega}\sum\limits _{l=1}^{k}C_{k}^{l}\sigma_{k}(\overbrace{\nabla^{2}u,\cdots,\nabla^{2}u}\limits ^{l},h,\cdots,h)u_{n}^{k-l+1}+\int_{\partial\Omega}\sigma_{k}(h)u_{n}^{k+1}\\
 & =: & I_{1}+I_{2}.
\end{eqnarray*}

The term $I_{2}$ is what we want in final. We compute $I_{1}$ further.
By intergration by parts and using again $u_{n}=c$, we get 
\begin{eqnarray}
I_{1} & = & \int_{\partial\Omega}\sum\limits _{l=1}^{k}C_{k}^{l}\sigma_{k}(\overbrace{\nabla^{2}u,\cdots,\nabla^{2}u}\limits ^{l},h,\cdots,h)u_{n}^{k-l+1}\label{xeq8}\\
 & =- & \int_{\partial\Omega}\sum_{l=2}^{k}C_{k}^{l}\frac{l-1}{k!}\delta_{\beta_{1}\cdots\beta_{k}}^{\alpha_{1}\cdots\alpha_{k}}u_{\alpha_{1}}\cdots u_{\alpha_{l}\beta_{l}\beta_{1}}h_{\alpha_{l+1}\beta_{l+1}}\cdots h_{\alpha_{k}\beta_{k}}u_{n}^{k-l+1},\nonumber 
\end{eqnarray}
where in the second equality we also used the fact $h_{\alpha_{k}\beta_{k}\beta_{1}}=h_{\alpha_{k}\beta_{1}\beta_{k}}$,
which is the Codazzi property of the second fundamental form $h_{\a\b}$.

The Ricci identity tells that 
\[
u_{\alpha_{l}\beta_{l}\beta_{1}}-u_{\alpha_{l}\beta_{1}\beta_{l}}=u_{\gamma}R_{\gamma\alpha_{l}\beta_{l}\beta_{1}}=u_{\gamma}h_{\gamma\beta_{l}}h_{\alpha_{l}\beta_{1}}-u_{\gamma}h_{\gamma\beta_{1}}h_{\alpha_{l}\beta_{l}}.
\]

Replacing the above into \eqref{xeq8}, we have 
\begin{eqnarray*}
I_{1} & = & \int_{\partial\Omega}\sum_{l=2}^{k}C_{k}^{l}\frac{l-1}{k!}\delta_{\beta_{1}\cdots\beta_{k}}^{\alpha_{1}\cdots\alpha_{k}}u_{\alpha_{1}}u_{\alpha_{2}\beta_{2}}\cdots u_{\alpha_{l-1}\beta_{l-1}}u_{\gamma}h_{\gamma\beta_{1}}h_{\alpha_{l}\beta_{l}}h_{\alpha_{l+1}\beta_{l+1}}\cdots h_{\alpha_{k}\beta_{k}}u_{n}^{k-l+1},\\
 & = & \int_{\partial\Omega}\sum_{l=2}^{k}C_{k}^{l}\frac{l-1}{k}[T_{k-1}]_{\alpha_{1}\beta_{1}}(\overbrace{\nabla^{2}u,\cdots,\nabla^{2}u}^{l-2},h,\cdots,h)u_{\alpha_{1}}u_{\gamma}h_{\gamma\beta_{1}}u_{n}^{k-l+1}\\
 & = & \int_{\partial\Omega}\sum_{l=0}^{k-1}C_{k}^{l+2}\frac{l+1}{k}[T_{k-1}]_{\alpha_{1}\beta_{1}}(\overbrace{\nabla^{2}u,\cdots,\nabla^{2}u}^{l+1},h,\cdots,h)u_{\alpha_{1}}u_{\gamma}h_{\gamma\beta_{1}}u_{n}^{k-l-1}.
\end{eqnarray*}

We sum $II$ and $I_{1}$ to be $III$.

\begin{eqnarray}
III & := & I_{1}+II\nonumber \\
 & = & \int_{\partial\Omega}\sum_{l=0}^{k-1}C_{k+1}^{l+2}\frac{l+1}{k}[T_{k-1}]_{\alpha_{1}\beta_{1}}(\overbrace{\nabla^{2}u,\cdots,\nabla^{2}u}^{l},h,\cdots,h)u_{\alpha_{1}}u_{\gamma}h_{\gamma\beta_{1}}u_{n}^{k-l-1}.\nonumber \\
\end{eqnarray}

In order to see the sign of $III$, we use $\nabla_{\alpha\beta}^{2}u=D_{\alpha\beta}^{2}u-h_{\alpha\beta}u_{n}$
in \eqref{eq:xeq9} to get 
\begin{eqnarray*}
III & = & \int_{\partial\Omega}\sum_{l=0}^{k-1}\sum_{i=0}^{l}(-1)^{l-i}C_{k+1}^{l+2}C_{l}^{i}\frac{l+1}{k}[T_{k-1}]_{\alpha_{1}\beta_{1}}(\overbrace{D^{2}u,\cdots,D^{2}u}^{i},h,\cdots,h)u_{\alpha_{1}}u_{\gamma}h_{\gamma\beta_{1}}u_{n}^{k-i-1}\\
 & = & \int_{\partial\Omega}\sum_{i=0}^{k-1}\sum_{l=i}^{k-1}(-1)^{l-i}C_{k+1}^{l+2}C_{l}^{i}\frac{l+1}{k}[T_{k-1}]_{\alpha_{1}\beta_{1}}(\overbrace{D^{2}u,\cdots,D^{2}u}^{i},h,\cdots,h)u_{\alpha_{1}}u_{\gamma}h_{\gamma\beta_{1}}u_{n}^{k-i-1}.
\end{eqnarray*}

If we can prove that $III\geq0$, we are done. We see first that since
$\lambda(D^{2}u)\in\Gamma_{k}^{+}$ and $h_{\alpha\beta}$ is nonnegative,
\begin{equation}
[T_{k-1}]_{\alpha_{1}\beta_{1}}(\overbrace{D^{2}u,\cdots,D^{2}u}^{i},h,\cdots,h)h_{\gamma\beta_{1}}u_{\alpha_{1}}u_{\gamma}\geq0
\end{equation}
by the Garding inequality (See \cite{Garding}).

Hence to prove $III\geq0$, we need to only to prove 
\begin{equation}
E:=\sum_{l=i}^{k-1}(-1)^{l-i}C_{k+1}^{l+2}C_{l}^{i}\frac{l+1}{k}\text{\ensuremath{\geq}0}.
\end{equation}
Following a trick in \cite{qiu2015family}, we deal this term as follows:
\begin{eqnarray*}
E & = & (k+1)C_{k-1}^{i}\sum_{l=i}^{k-1}\frac{1}{l+2}C_{k-1-i}^{l-i}(-1)^{l-i}\\
 & = & (k+1)C_{k-1}^{i}\sum_{p=0}^{k-1-i}\frac{1}{p+i+2}C_{k-1-i}^{p}(-1)^{p}.
\end{eqnarray*}
Note that the folowing elementary equality holds: 
\begin{eqnarray*}
\int_{0}^{1}t^{i+1}(1-t)^{k-i-1}dt & = & \int_{0}^{1}t^{1+i}\sum_{p=0}^{k-i-1}C_{k-i-1}^{p}(-1)^{p}t^{p}dt\\
 & = & \sum_{p=0}^{k-i-1}C_{k-i-1}^{p}(-1)^{p}\frac{t^{p+2+i}}{p+2+i}\big|_{t=0}^{t=1}.
\end{eqnarray*}
Hence 
\begin{eqnarray*}
E=(k+1)C_{k-1}^{i}\int_{0}^{1}t^{1+i}(1-t)^{k-1-i}dt=\frac{i+1}{k}.
\end{eqnarray*}
Finally, we conclude 
\begin{eqnarray*}
 &  & \int_{\partial\Omega}[T_{k}]_{ij}(D^{2}u)u_{i}\nu_{j}d\mu\\
 & = & \int_{\partial\Omega}\sigma_{k}(h)u_{n}^{k+1}+\sum_{i=0}^{k-1}\frac{i+1}{k}[T_{k-1}]_{\alpha_{1}\beta_{1}}(\overbrace{D^{2}u,\cdots,D^{2}u}^{i},h,\cdots,h)h_{\gamma\beta_{1}}u_{\alpha_{1}}u_{\gamma}u_{n}^{k-i-1}\text{}\\
 & \geq & \int_{\partial\Omega}\sigma_{k}(h)u_{n}^{k+1}.
\end{eqnarray*}
The proof of Proposition \ref{Reilly} is completed. 
\end{proof}
\noindent \textbf{Proof of Theorem \ref{thm:AF}.} From Theorem \ref{thm:Classic},
there is a $k$ admissible solution $u$ and an unique constant $c$
satisfying 
\begin{equation}
\begin{cases}
\sigma_{k}(D^{2}u)=C_{n}^{k} & \begin{array}{cc}
 & \hbox{ in }\Omega,\end{array}\\
u_{\nu}=c & \begin{array}{cc}
 & \hbox{ on }\partial\Omega.\end{array}
\end{cases}\label{eq:Cla}
\end{equation}

On one hand, since $\lambda(D^{2}u)\in\Gamma_{k}^{+}$, the Newton-Maclaurin
inequality \eqref{NM} 
that 
\begin{equation}
\int_{\Omega}(k+1)\sigma_{k+1}(D^{2}u)\leq\int_{\Omega}(k+1)C_{n}^{k+1}\left(\frac{\sigma_{k}(D^{2}u)}{C_{n}^{k}}\right)^{\frac{k+1}{k}}.\label{NM1}
\end{equation}
Using Proposition \ref{Reilly}, \eqref{NM1} and equation (\ref{eq:Cla}),
we have 
\begin{equation}
c^{k+1}\int_{\partial\Omega}\sigma_{k}(h)\leq(k+1)C_{n}^{k+1}{\rm Vol}(\Omega).\label{eq:first}
\end{equation}

On the other hand the Newton-MacLaurin inequality \eqref{NM} also
that 
\begin{equation}
\int_{\Omega}\left(\frac{\sigma_{k}(D^{2}u)}{C_{n}^{k}}\right)^{\frac{1}{k}}\leq\int_{\Omega}\frac{\sigma_{1}(D^{2}u)}{n}=\int_{\partial\Omega}\frac{u_{\nu}}{n}=c\frac{{\rm Area}(\partial\Omega)}{n}.\label{eq:second}
\end{equation}

Combine (\ref{eq:first}) and (\ref{eq:second}), we conclude 
\[
{\rm Vol}(\Omega)^{k}\int_{\partial\Omega}\sigma_{k}(h)\leq\frac{C_{n-1}^{k}}{n^{k}}{\rm Area}(\partial\Omega)^{k+1}.
\]

When equality attains in the above inequality, from the proof in Proposition
\ref{Reilly}, we know that the tangential derivative of $u$ vanishes.
Also we see from \eqref{eq:first} that $\left(\frac{\sigma_{k}(D^{2}u)}{C_{n}^{k}}\right)^{\frac{1}{k}}=\frac{\sigma_{1}(D^{2}u)}{n}$.
So $u$ satisfies the overdetermined problem 
\[
\begin{cases}
\Delta u=n & \begin{array}{cc}
\hbox{ in } & \Omega\end{array}\\
u_{\nu}=c & \begin{array}{cc}
\hbox{ on } & \partial\Omega\end{array}\\
u=b & \begin{array}{cc}
\hbox{ on } & \partial\Omega,\end{array}
\end{cases}
\]
where $b$ is a constant.

By using the result of Serrin \cite{serrin1971symmetry}, we have
that \eqref{eq:AFIne} becomes equality if and only $\Omega$ is a
ball. 
 The proof is completed. \qed

\

\noindent  \textbf{Acknowledgement. }
 The first author would like to express  gratitude to Prof. Xinan
Ma for his advice and constant encouragement. Both authors would
like to thank Prof. Pengfei Guan for valuable discussions and support.

\


\begin{thebibliography}{10}
\bibitem{alexandroff1937theorie} Alexander Alexandroff. \newblock
Zur theorie der gemischten volumina von konvexen korpern. ii. neue
ungleichungen zwischen den gemischten volumina und ihre anwendungen.
\newblock {\em Matematicheskii Sbornik}, 44(6):1205--1238, 1937.

\bibitem{Cabre} Xavier Cabre. \newblock Elliptic PDE's in probability
and geometry: symmetry and regularity of solutions. \newblock {\em
Discrete and Continuous Dynamical Systems. Series A} 20(3): 425-457,
2008.

\bibitem{chang2013} Sun-Yung~A Chang and Yi~Wang. \newblock Inequalities
for quermassintegrals on k-convex domains. \newblock {\em Advance
in Mathematics}, 248: 335-377, 2013.

\bibitem{chang2013some} Sun-Yung~A Chang and Yi~Wang. \newblock
Some higher order isoperimetric inequalities via the method of optimal
transport. \newblock {\em International Mathematics Research Notices},
page rnt182, 2013.

\bibitem{Garding} Lars Garding. \newblock An inequality for hyperbolic
polynomials. \newblock {\em J. Math. Mech.} 8: 957-965, 1959.

\bibitem{gilbarg2015elliptic} David Gilbarg and Neil~S Trudinger.
\newblock {\em Elliptic partial differential equations of second
order}. \newblock Springer-Verlag, Berlin Heidelberg NewYork Tokyo,
1983.

\bibitem{Gromov} M Gromov. \newblock Isometric immersions of Riemannian
manifolds. \newblock{\em Ast\'erisque Numero Hors Serie}, 129-133,
1985.

\bibitem{guan2009quermassintegral} Pengfei Guan and Junfang Li. \newblock
The quermassintegral inequalities for k-convex starshaped domains.
\newblock {\em Advances in Mathematics}, 221(5):1725--1732, 2009.

\bibitem{Guan} Pengfei Guan \newblock Topics in Geometric Fully
Nonlinear Equations, available at ``http://www.math.mcgill.ca/guan/notes.html\textquotedbl{}.

\bibitem{lions1986neumann} P-L Lions, NS~Trudinger, and JIE Urbas.
\newblock The neumann problem for equations of monge-amp\'ere type.
\newblock {\em Communications on Pure and Applied Mathematics},
39(4):539--563, 1986.

\bibitem{ma2015neumann} Xinan Ma and Guohuan Qiu. \newblock The
neumann problem for hessian equations. \newblock {\em arXiv preprint
arXiv:1508.00196}, 2015.

\bibitem{MQX} Xinan Ma, Guohuan Qiu, and Jinju Xu. \newblock Gradient
estimates on hessian equations for neumann problem. \newblock {\em
SCIENTIA SINICA Mathematica}, 2016.

\bibitem{Nardi} Giacomo Nardi. \newblock Schauder estimate for solutions
of Poisson's equation with Neumann boundary condition. \newblock
{\em L'Enseignement Mathematique}, 60 (3-4): 421-435, 2014.

\bibitem{qiu2015family} Guohuan Qiu. \newblock A family of higher-order
isoperimetric inequalities. \newblock {\em Communications in Contemporary
Mathematics}, 17(03):1450015, 2015.

\bibitem{Reilly1973} Robert~C Reilly. \newblock On the Hessian
of a function and the curvatures of its graph. \newblock {\em Michigan
Math. J. } 20: 373-383, 1973.

\bibitem{reilly1977applications} Robert~C Reilly. \newblock Applications
of the Hessian operator in a Riemannian manifold. \newblock {\em
Indiana Univ. Math. J}, 26(3):459--472, 1977.

\bibitem{Reilly1980} Robert~C Reilly. \newblock Geometric applications
of the solvability of Neumann problems on a Riemannian manifold. \newblock
{\em Arch. Rational Mech. Anal.} 75(1): 23-29, 1980.

\bibitem{schneider2013convex} Rolf Schneider. \newblock Convex bodies:
the Brunn--Minkowski theory. \newblock Number 151. Cambridge University
Press, 2013.

\bibitem{serrin1971symmetry} James Serrin. \newblock A symmetry
problem in potential theory. \newblock {\em Archive for Rational
Mechanics and Analysis}, 43(4):304--318, 1971.

\bibitem{trudinger1987degenerate} Neil~S Trudinger. \newblock On
degenerate fully nonlinear elliptic equations in balls. \newblock
{\em Bulletin of the Australian Mathematical Society}, 35(02):299--307,
1987.

\bibitem{trudinger1994isoperimetric} Neil~S Trudinger. \newblock
Isoperimetric inequalities for quermassintegrals. \newblock In {\em
Annales de l'IHP Analyse non lineaire}, volume~11, pages 411--425,
1994.

\bibitem{wang2013alexandroff} Yu~Wang and Xiangwen Zhang. \newblock
An Alexandroff--Bakelman--Pucci estimate on riemannian manifolds.
\newblock {\em Advances in Mathematics}, 232(1):499--512, 2013.

\bibitem{xia2015minkowski} Chao Xia. \newblock A Minkowski type
inequality in space forms. \newblock {\em Calc. Var. PDE, to appear},
2016.

\bibitem{xiaabp} Chao Xia and Xiangwen Zhang. \newblock ABP estimate
and geometric inequalities. \newblock {\em Communications in Analysis
and Geometry, to appear}, 2016.\end{thebibliography}
\end{document}